\documentclass{amsart}

\usepackage{amsfonts}
\usepackage{amssymb}
\usepackage{xy}
\input xy
\xyoption{all}
\usepackage{amsmath}
\usepackage{amsthm}
\usepackage{url}
\usepackage{verbatim}
\usepackage{hyperref}
\usepackage{cleveref}

\title{$THH$ and base-change for Galois extensions of ring spectra}
\date{\today}
\author{Akhil Mathew}
\address{Harvard University, Cambridge, MA 02138}
\email{amathew@math.harvard.edu}




\newcommand{\clg}{\mathrm{CAlg}}
\newcommand{\perf}{\mathrm{Perf}}
\newcommand{\fun}{\mathrm{Fun}}
\newcommand{\md}{\mathrm{Mod}}

\renewcommand{\sp}{\mathrm{Sp}}
\renewcommand{\hom}{\mathrm{Hom}}

\renewcommand{\ell}{\mathrm{Ell}}
\newcommand{\e}[1]{\mathbb{E}_{#1}}

\newtheorem{lemma}{Lemma}[section]

\newtheorem{corollary}[lemma]{Corollary}
\newtheorem{theorem}[lemma]{Theorem}
\newtheorem*{thm}{Theorem}
\newtheorem{proposition}[lemma]{Proposition}

\theoremstyle{definition}
\newtheorem{definition}[lemma]{Definition}
 \theoremstyle{definition}

\newtheorem{example}[lemma]{Example}

\newtheorem*{question}{Question}

\begin{document}

\begin{abstract}
We treat the question of base-change in $THH$ for faithful Galois extensions
of ring spectra in the sense of Rognes.
Given 
a faithful Galois extension $A \to B$ of ring spectra, we consider whether the
map $THH(A) \otimes_A B \to THH(B)$ is an equivalence. 
We reprove and extend positive results of Weibel-Geller and McCarthy-Minasian
and offer new examples of Galois extensions for which base-change holds. 
We also provide a counterexample where base-change fails. 
\end{abstract}

\maketitle
\section{Introduction}

Let $R$ be an $\mathbb{E}_1$-ring spectrum. The \emph{topological Hochschild
homology} $THH(R)$ of $R$ is a spectrum constructed as  the geometric
realization of a certain cyclic object built from $R$, a homotopy-theoretic
version of the Hochschild 
complex of an associative ring. 
Topological Hochschild homology has been studied in particular because of its connections with
algebraic $K$-theory via the theory of trace maps. 
More generally, if $R$ is an $\mathbb{E}_1$-algebra in $A$-modules for an
$\mathbb{E}_\infty$-ring $A$,
then one can define a relative version $THH^A(R)$.

In \cite{WG}, it is shown that Hochschild homology for
commutative rings satisfies an \'etale base-change result. Equivalently, if $k$ is
a commutative ring and  if $A \to
B$ is an \'etale morphism of commutative $k$-algebras with $A$ flat over $k$, then there is a natural
equivalence $$B
\otimes_A THH^{k}(A) \simeq
THH^{k}(B).$$ Weibel-Geller's result also applies in the non-flat case,
although it cannot be stated in this manner.  

One can hope to generalize the Weibel-Geller result to the setting of ring
spectra. 
This leads to the following general question.

\begin{question} 
Let $A \to B$ be a morphism of  $\mathbb{E}_\infty$-ring spectra.
When is the map \begin{equation} THH(A) \otimes_A B \to THH(B)
\label{THHcomparemap}\end{equation} an equivalence? 
\end{question}

Following Lurie, we will use the following definition 
of \'etaleness: 

\begin{definition} 
\label{lurieetale}
A morphism $ A\to B$ of $\mathbb{E}_\infty$-ring spectra 
is \emph{\'etale}
if
$\pi_0(A) \to \pi_0(B)$ is \'etale and the natural map $\pi_*(A)
\otimes_{\pi_0(A)} \pi_0(B) \to \pi_*(B)$ is an isomorphism. 
\end{definition}

In \cite{McM}, 
McCarthy-Minasian 
consider this question for an \'etale morphism\footnote{We note that
\cite{McM} use the word``\'etale'' differently in their paper.} of connective
$\mathbb{E}_\infty$-rings  and prove the analog of the
Weibel-Geller theorem, i.e., that \eqref{THHcomparemap} is an equivalence (cf. \cite[Lem. 5.7]{McM}). 
In fact, they prove the result more generally for any \emph{$THH$-\'etale
morphism} of connective 
$\mathbb{E}_\infty$-rings. 

In the setting of structured ring spectra, however, there 
are additional morphisms of nonconnective ring spectra that 
have formal properties similar to those of \'etale morphisms, though they are not \'etale on homotopy
groups. The \emph{faithful Galois extensions} of Rognes \cite{rognes} are key
examples here.

This note is primarily concerned with the following analog of the
Weibel-Geller and McCarthy-Minasian question. 

\begin{question} 
Let $A \to B$ be a $G$-Galois extension of $\mathbb{E}_\infty$-ring spectra,
with $G$ finite.
When is the comparison map 
\eqref{THHcomparemap} an equivalence? 
\end{question}

We make two main observations here. 
Our first observation uses the fact that $THH$, like algebraic $K$-theory, is an invariant not only of ring
spectra but of stable $\infty$-categories. 
We refer, for example, to \cite{BM1, BGT} for a treatment of $THH$ in this context. 
Using Galois descent, we observe that the map 
\eqref{THHcomparemap} is an equivalence if and only if the map 
\( THH(A) \to THH(B)^{hG}  \)
is an equivalence. 
These maps are the comparison maps for the \emph{Galois descent} problem in
$THH$. Consequently, the results of \cite{CMNN} provide numerous examples
in chromatic homotopy theory where
\eqref{THHcomparemap} is an equivalence. 

Our second observation is to reinterpret the base-change question 
for $THH$ in terms of the formulation $THH(R) \simeq S^1 \otimes R$ for
$\mathbb{E}_\infty$-rings, due to McClure-Schw{\"a}nzl-Vogt \cite{MSV}. 

As a result, we obtain an example where \eqref{THHcomparemap} is not an
equivalence. 

\begin{theorem} 
\label{mainexample}
There is a faithful $G$-Galois extension $A \to B$ of $\mathbb{E}_\infty$-ring spectra which is a
faithful $G$-Galois extension such that \eqref{THHcomparemap} is not an
equivalence.
\end{theorem}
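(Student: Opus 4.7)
The plan is to combine both observations from the introduction. By the Galois descent reformulation, the base-change map for a faithful $G$-Galois extension $A \to B$ is an equivalence if and only if the comparison map $THH(A) \to THH(B)^{hG}$ is an equivalence. Rewriting via the McClure--Schw\"anzl--Vogt identification $THH(R) \simeq S^1 \otimes R$ in $\mathbb{E}_\infty$-rings, this becomes the question of whether $S^1 \otimes B^{hG} \to (S^1 \otimes B)^{hG}$ is an equivalence. Since $S^1 \otimes (-)$ on $\mathbb{E}_\infty$-rings is a colimit construction, there is no formal reason for it to commute with the homotopy limit $(-)^{hG}$, and the strategy is to exhibit a Galois extension where it does not.

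To turn this into a detectable obstruction, I corepresent both sides of \eqref{THHcomparemap}. The source $THH(A) \otimes_A B = (S^1 \otimes A) \otimes_A B$ corepresents, for a test $\mathbb{E}_\infty$-ring $T$, pairs $(h \colon B \to T,\ \alpha \colon S^1 \to \mathrm{Map}_{\mathbb{E}_\infty}(A, T))$ with $\alpha(\ast) = h|_A$, while the target $THH(B) = S^1 \otimes B$ corepresents pairs $(h,\ \beta)$ with $\beta$ a loop at $h$ in $\mathrm{Map}_{\mathbb{E}_\infty}(B, T)$. The base-change map sends $(h, \beta) \mapsto (h, \beta|_A)$, and hence it is an equivalence iff every loop at a point of the form $h|_A$ in $\mathrm{Map}_{\mathbb{E}_\infty}(A,T)$ lifts to a loop at $h$ in $\mathrm{Map}_{\mathbb{E}_\infty}(B,T)$. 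When $T$ is a $B$-algebra, the fiber of $\mathrm{Map}_{\mathbb{E}_\infty}(B,T) \to \mathrm{Map}_{\mathbb{E}_\infty}(A,T)$ over $h|_A$ is a discrete $G$-torsor by the Galois identification $B \otimes_A T \simeq \prod_G T$, so the lifting obstruction is a monodromy homomorphism
\[
\pi_1\bigl(\mathrm{Map}_{\mathbb{E}_\infty}(A, T),\ h|_A\bigr) \longrightarrow G.
\]
The problem thus reduces to exhibiting a faithful Galois extension together with a test algebra $T$ for which this monodromy is nonzero.

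The desired extension must be nonconnective and not \'etale on homotopy groups, since \'etale extensions satisfy base-change by McCarthy--Minasian. Natural candidates come from Rognes' list of faithful Galois extensions of nonconnective $\mathbb{E}_\infty$-rings, where $\mathrm{Map}_{\mathbb{E}_\infty}(A, T)$ has rich $\pi_1$ reflecting nontrivial deformation theory. Given such a candidate, one chooses $T$ and exhibits a class in $\pi_1(\mathrm{Map}_{\mathbb{E}_\infty}(A,T))$ whose image in $G$ is nontrivial, either via obstruction theory with the cotangent complex or by direct low-degree computation of the homotopy of the mapping space.

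The principal obstacle is computational: while the abstract reason for failure---that $S^1 \otimes (-)$ does not commute with $(-)^{hG}$---is visible from the reformulation, producing an explicit witness requires detailed control of the Postnikov structure of $\mathrm{Map}_{\mathbb{E}_\infty}(A, T)$ for the chosen extension. An alternative route is to bypass explicit obstruction theory by computing $\pi_\ast THH(A)$ and $\pi_\ast THH(B)^{hG}$ directly in a single convenient degree where they can be shown to disagree, thereby detecting the failure of Galois descent at the level of homotopy groups.
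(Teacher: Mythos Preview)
Your reformulation is correct and in fact matches the paper's framework exactly: the base-change map is an equivalence iff $A \to B$ is \emph{strongly 0-cotruncated}, i.e.\ iff for every test $T$ the map $\hom_{\clg}(B,T) \to \hom_{\clg}(A,T)$ is a \emph{split} covering space, and your monodromy homomorphism $\pi_1(\mathrm{Map}_{\mathbb{E}_\infty}(A,T)) \to G$ is precisely the obstruction to splitting. So the setup is fine.

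The gap is that you never produce the example. The theorem is an existence statement, and after reducing to ``find a faithful Galois extension and a test algebra $T$ for which the monodromy is nonzero'' you stop, deferring to unspecified candidates from Rognes' list and unspecified obstruction-theoretic or low-degree computations. That is a plan, not a proof: the entire content of the theorem lies in exhibiting the witness, and neither of your two suggested routes (cotangent-complex obstruction theory, or direct comparison of $\pi_* THH(A)$ with $\pi_* THH(B)^{hG}$) is carried out.

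The paper fills this gap with a very concrete and computable example. Take $k$ a separably closed field of characteristic $p$, and let $\phi\colon C^*(S^1;k) \to C^*(S^1;k)$ be induced by the degree-$p$ self-cover $S^1 \to S^1$; this is a faithful $\mathbb{Z}/p$-Galois extension by Rognes' criteria. For the test object take $T = k$ itself. Mandell's $p$-adic homotopy theory identifies $\hom_{\clg_{k/}}(C^*(S^1;k),k)$ with the $p$-completion of $S^1$, namely $K(\mathbb{Z}_p,1)$, and under this identification $\phi^*$ becomes multiplication by $p$ on $K(\mathbb{Z}_p,1)$. This is a covering map (so the extension is 0-cotruncated, as any faithful Galois extension must be) but not a split one, since $p\cdot\mathbb{Z}_p \subsetneq \mathbb{Z}_p$. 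Equivalently, in your language, the monodromy $\pi_1(K(\mathbb{Z}_p,1)) \cong \mathbb{Z}_p \to \mathbb{Z}/p$ is the reduction map, which is nonzero. No obstruction theory or $THH$ computation is needed once one has Mandell's theorem in hand.
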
 

Our counterexample Galois extension is very simple; it is the map $C^*(S^1; \mathbb{F}_p) \to
C^*(S^1; \mathbb{F}_p)$ induced by the degree $p$ cover $S^1 \to S^1$. 

We in fact pinpoint exactly what goes wrong from a categorical perspective, and
why this phenomenon cannot happen in the \'etale setting, thus proving  a
variant 
of the Weibel-Geller-McCarthy-Minasian theorem in the non-connective setting: 

\begin{theorem} 
\label{ourversionWG}
Let $R$ be an $\mathbb{E}_\infty$-ring, and let $A \to B$ be an \'etale morphism of
$\mathbb{E}_\infty$-$R$-algebras (possibly nonconnective). Then the natural map $THH^R(A) \otimes_A B \to
THH^R(B)$ is an equivalence. 
\end{theorem}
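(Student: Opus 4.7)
My plan is to combine the McClure--Schw\"anzl--Vogt identification $THH^R(C) \simeq S^1 \otimes_R C$ from \cite{MSV} (tensoring $\mathrm{CAlg}_R$ over spaces) with Yoneda. Under this identification, the natural map $THH^R(A) \otimes_A B \to THH^R(B)$ becomes the comparison map $(S^1 \otimes_R A) \otimes_A B \to S^1 \otimes_R B$ in $\mathrm{CAlg}_R$, and it suffices to check that for every $D \in \mathrm{CAlg}_R$ the induced map on $\mathrm{Map}_R(-, D)$ is an equivalence.

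The next step is to unpack both mapping spaces explicitly. Since $(S^1 \otimes_R A) \otimes_A B$ is the pushout in $\mathrm{CAlg}_R$ of $S^1 \otimes_R A \leftarrow A \to B$, one has
\[
\mathrm{Map}_R((S^1 \otimes_R A) \otimes_A B, D) \simeq \mathrm{Map}(S^1, \mathrm{Map}_R(A, D)) \times_{\mathrm{Map}_R(A, D)} \mathrm{Map}_R(B, D),
\]
whereas $\mathrm{Map}_R(S^1 \otimes_R B, D) \simeq \mathrm{Map}(S^1, \mathrm{Map}_R(B, D))$ is the free loop space. The comparison reduces to showing the natural restriction map between these spaces is an equivalence.

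Now I would invoke Lurie's classification of \'etale $\mathbb{E}_\infty$-ring maps (\emph{Higher Algebra}, Proposition 7.5.0.6): for an \'etale morphism $A \to B$ and any $A$-algebra $E$, the space $\mathrm{Map}_A(B, E)$ is discrete and canonically identified with $\mathrm{Hom}_{\pi_0 A}(\pi_0 B, \pi_0 E)$. Taking $E = D_\phi$, the fiber of $\mathrm{Map}_R(B, D) \to \mathrm{Map}_R(A, D)$ over $\phi \colon A \to D$ becomes a discrete set depending only on the connected component of $\phi$. The restriction map is therefore $0$-truncated with locally constant fibers, i.e., an $\infty$-categorical covering space with trivial monodromy. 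For any such covering $Y \to X$, the canonical map
\[
\mathrm{Map}(S^1, Y) \to Y \times_X \mathrm{Map}(S^1, X)
\]
is an equivalence: loops in $X$ lift uniquely to paths in $Y$ once a basepoint lift is chosen, and the trivial-monodromy hypothesis guarantees the lifts remain loops. Applied to our setup this gives the desired equivalence of mapping spaces for every $D$, proving the theorem by Yoneda.

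The main obstacle I anticipate is making the triviality of the monodromy precise: one must verify that the bijection $\mathrm{Map}_A(B, D_\phi) \simeq \mathrm{Hom}_{\pi_0 A}(\pi_0 B, \pi_0 D)$ is functorial in $\pi_0 \phi$ in a sufficiently rigid way that the associated local system on $\mathrm{Map}_R(A, D)$ genuinely factors through $\pi_0\,\mathrm{Map}_R(A, D)$. This is the categorical content of \emph{\'etale extensions being classified by $\pi_0$}, and it is precisely what fails for the non-\'etale faithful Galois extension of Theorem~\ref{mainexample}.
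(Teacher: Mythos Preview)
Your proposal is correct and follows essentially the same route as the paper: reduce via the $S^1$-tensor description and Yoneda to showing that $\mathrm{Map}_R(B,D)\to\mathrm{Map}_R(A,D)$ is a split covering, then invoke Lurie's \'etale classification. The monodromy worry you flag is exactly what the paper dispatches by observing that this map is the pullback of the map of \emph{discrete} sets $\hom_{\mathrm{Ring}}(\pi_0 B,\pi_0 D)\to\hom_{\mathrm{Ring}}(\pi_0 A,\pi_0 D)$ along $\mathrm{Map}(A,D)\to\hom_{\mathrm{Ring}}(\pi_0 A,\pi_0 D)$, so the split-cover property is automatic and no separate monodromy argument is needed.
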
 
The use of categorical 
interpretation of $THH$ in proving such base-change theorems is not new; McCarthy-Minasian 
 use this interpretation in \cite{McM} in a different manner. 
\subsection*{Acknowledgments}
I would like to thank John Rognes and the referee for several helpful comments. 
The author is supported by the NSF Graduate Research Fellowship
under grant DGE-110640.
\section{Categorical generalities}
Let $\mathcal{C}$ be a cocomplete $\infty$-category, and let $x \in
\mathcal{C}$. 
Given $x \in \mathcal{C}$, we can \cite[\S 4.4.4]{HTT} construct an object $S^1 \otimes x$. 

Choose a basepoint $\ast \in S^1$. 
Then we have a diagram
\begin{equation} \label{s1square} \xymatrix{
x \ar[d] \ar[r] & y \ar[d] \\
S^1 \otimes x \ar[r] &  S^1 \otimes y
}.\end{equation}
As a result of this diagram,
we have a natural map
in $\mathcal{C}$,
\begin{equation} \label{naturalpushout} (S^1 \otimes x ) \sqcup_x y \to S^1 
\otimes y.
 \end{equation}
In order for \eqref{naturalpushout} to be an equivalence, for any object $z \in \mathcal{C}$, the square
of spaces
\begin{equation} \label{freeloopsquarehom} \xymatrix{
\hom(S^1, \hom_{\mathcal{C}}(y, z)) \ar[d] \ar[r] & \hom_{\mathcal{C}}(y, z) \ar[d] \\
\hom(S^1, \hom_{\mathcal{C}}(x, z)) \ar[r] &  \hom_{\mathcal{C}}(x,z)
}\end{equation}
must be homotopy cartesian. 
This happens only in very special situations.

\begin{proposition} 
\label{easycartlemma}
Let $f\colon X \to Y$ be a map of spaces. Then the diagram
\begin{equation} \label{generalcart} \xymatrix{
\hom(S^1, X) \ar[d] \ar[r] &  X \ar[d] \\
 \hom(S^1, Y) \ar[r] &  Y
}\end{equation}
is homotopy cartesian if and only if for every point $p \in X$, the map from
the connected component
of $X$ containing $p$
to that of $Y$ containing $f(p)$ is a homotopy equivalence. 
\end{proposition}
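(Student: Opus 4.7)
The plan is to check the homotopy cartesianness of \eqref{generalcart} fiberwise over $X$ and reduce the condition to a statement about based loop spaces. Recall that the square is homotopy cartesian if and only if, for every point $p \in X$, the induced map on homotopy fibers of the two horizontal arrows over $p$ and $f(p)$ is a weak equivalence. The evaluation-at-basepoint map $\hom(S^1, X) \to X$ has homotopy fiber over $p$ canonically identified with the based loop space $\Omega_p X$, and likewise for $Y$. So the square is homotopy cartesian if and only if, for every $p \in X$, the induced map $\Omega_p X \to \Omega_{f(p)} Y$ is a weak equivalence.

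Next I would observe that based loops at $p$ only see the path component of $p$: one has $\Omega_p X = \Omega_p X_p$, where $X_p$ denotes the component of $X$ containing $p$, and similarly $\Omega_{f(p)} Y = \Omega_{f(p)} Y_{f(p)}$. Thus the fiber condition rephrases as: for every $p \in X$, the restriction $X_p \to Y_{f(p)}$ of $f$ to components induces a weak equivalence on based loop spaces.

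At this point both directions of the proposition follow formally. If each component map $X_p \to Y_{f(p)}$ is a homotopy equivalence, then it obviously induces an equivalence on based loop spaces, giving one implication. For the converse, a pointed map between pointed path-connected spaces is a weak equivalence if and only if it induces a weak equivalence on based loop spaces: this comes from the natural isomorphisms $\pi_n(X_p, p) \cong \pi_{n-1}(\Omega_p X_p)$ for $n \geq 1$, combined with $\pi_0(X_p) = \pi_0(Y_{f(p)}) = \ast$ since both components are connected.

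I do not anticipate a real obstacle; the only care required is making the fiber identification with based loop spaces precise and observing that $\Omega$ only sees a single component. Working in the $\infty$-category of spaces, no CW-hypothesis is needed, and weak equivalence coincides with homotopy equivalence.
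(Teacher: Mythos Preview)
Your proposal is correct and follows essentially the same approach as the paper: identify the homotopy fibers of the evaluation maps $\hom(S^1,X)\to X$ and $\hom(S^1,Y)\to Y$ with based loop spaces, and use that a map of connected spaces is a weak equivalence if and only if it is so on loops. The paper's version is slightly terser---it reduces to the connected case at the outset and only spells out one implication---whereas you carry out the componentwise argument explicitly and treat both directions, but the substance is the same.
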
 
\begin{proof} 
Without loss of generality, we may assume that $X, Y$ are connected spaces, 
In this case, choosing compatible basepoints in $X, Y$, we get equivalences
\[ \Omega X \simeq \mathrm{fib}\left( \hom(S^1, X) \to X\right), \quad 
\Omega Y \simeq \mathrm{fib} \left( \hom(S^1, Y) \to Y \right),
\]
and the fact that 
\eqref{generalcart}
is homotopy cartesian now implies that $\Omega X \to \Omega Y$ is a homotopy
equivalence. Since $X$ and $Y$ are connected, this implies that $X \to Y$ is a
homotopy equivalence. 
\end{proof} 

\begin{definition} 
We will say that a map of spaces $X \to Y$ is a \emph{split covering space} if the
equivalent conditions
of \Cref{easycartlemma} are met. In particular, $X \to Y$ is a covering space, which is
trivial on each connected component of $Y$.
\end{definition} 

Observe that the base-change of a split covering space is still a split
covering space. 

\begin{corollary} 
\label{whenbasechangeholds}
Suppose $x \to y$ is a morphism in $\mathcal{C}$ as above. Then the natural map
$(S^1 \otimes x) \sqcup_x y \to S^1 \otimes y$ is an equivalence if and only if,
for every object $z \in \mathcal{C}$, the induced map of spaces
\( \hom_{\mathcal{C}}(y, z) \to \hom_{\mathcal{C}}(x, z)  \)
is a split cover.
\end{corollary}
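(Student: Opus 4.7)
The plan is to reduce the question to testing via the Yoneda embedding, and then apply \Cref{easycartlemma} pointwise.

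First I would observe that to check whether the natural map $(S^1 \otimes x) \sqcup_x y \to S^1 \otimes y$ in $\mathcal{C}$ is an equivalence, it suffices (by the Yoneda lemma) to show that for every object $z \in \mathcal{C}$ the induced map on mapping spaces $\hom_{\mathcal{C}}(S^1 \otimes y, z) \to \hom_{\mathcal{C}}((S^1 \otimes x) \sqcup_x y, z)$ is an equivalence. Using the defining adjunction $\hom_{\mathcal{C}}(S^1 \otimes w, z) \simeq \hom(S^1, \hom_{\mathcal{C}}(w, z))$ and the fact that $\hom_{\mathcal{C}}(-,z)$ sends pushouts to pullbacks, this is exactly the condition that the square \eqref{freeloopsquarehom} be homotopy cartesian for every $z$.

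Next I would apply \Cref{easycartlemma} to the map of spaces $f\colon \hom_{\mathcal{C}}(y, z) \to \hom_{\mathcal{C}}(x, z)$, which says that this square is homotopy cartesian if and only if $f$ is a split covering space in the sense introduced after \Cref{easycartlemma}. Collecting this over all $z \in \mathcal{C}$ yields exactly the stated equivalence.

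The only routine check is that the natural map produced via \eqref{s1square} agrees, after applying $\hom_{\mathcal{C}}(-,z)$, with the evaluation-at-basepoint map in \eqref{generalcart}; this is immediate from the construction of $S^1 \otimes (-)$ via the tensoring of $\mathcal{C}$ over spaces. There is essentially no obstacle here beyond assembling these formal ingredients: the content of the corollary is entirely packaged into \Cref{easycartlemma} together with the Yoneda/adjunction argument.
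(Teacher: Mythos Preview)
Your proposal is correct and follows essentially the same approach as the paper: the paper's proof simply invokes the observation (stated just before the corollary) that \eqref{naturalpushout} is an equivalence if and only if \eqref{freeloopsquarehom} is homotopy cartesian for every $z$, and then applies \Cref{easycartlemma}. You have spelled out the Yoneda/adjunction reasoning behind that observation in more detail, but the argument is the same.
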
 
\begin{proof} 
Our map is an equivalence if and only if 
\eqref{freeloopsquarehom} is homotopy cartesian for each $z \in \mathcal{C}$. 
By \Cref{easycartlemma}, we get the desired claim. 
\end{proof} 

We now give this class of morphisms a name.

\begin{definition} 
\label{strongly}
A morphism $x \to y$ in an $\infty$-category $\mathcal{C}$ is said to be
\emph{strongly 0-cotruncated} if, for every $z \in \mathcal{C}$, the map
$\hom_{\mathcal{C}}(y, z) \to \hom_{\mathcal{C}}(x, z)$ is a split covering
space.
\end{definition} 

\Cref{whenbasechangeholds} states that $x \to y$ has the property that $(S^1
\otimes  x) \sqcup_x y \to
S^1 \otimes y$ is an equivalence if and only if the map is strongly
0-cotruncated.

For passage 
to a relative setting, we will find the following useful.
\begin{proposition} 
\label{relativecotrunc}
Let $\mathcal{C}$ be a cocomplete $\infty$-category, let $a \in \mathcal{C}$,
and let $x \to y$ be a morphism in $\mathcal{C}_{a/}$. If $x \to y$ is strongly
0-cotruncated when regarded as a morphism in $\mathcal{C}$, then it is strongly
0-cotruncated 
when regarded as a morphism in $\mathcal{C}_{a/}$.
\end{proposition}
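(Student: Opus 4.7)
The plan is to reduce the relative statement to the absolute one via the standard identification of mapping spaces in a slice $\infty$-category as homotopy fibers of mapping spaces in the ambient category. Concretely, for any $w, z \in \mathcal{C}_{a/}$ one has a homotopy fiber sequence
\[
\hom_{\mathcal{C}_{a/}}(w, z) \to \hom_{\mathcal{C}}(w, z) \to \hom_{\mathcal{C}}(a, z),
\]
where the right-hand map is precomposition with the structure arrow $a \to w$ and the fiber is taken over the point classifying the structure arrow $a \to z$ (cf. \cite[\S 4.2]{HTT}).

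Applying this description to both $w = x$ and $w = y$, and using that $\hom_{\mathcal{C}}(y, z) \to \hom_{\mathcal{C}}(x, z)$ is compatible with the restrictions to $\hom_{\mathcal{C}}(a, z)$, I would obtain a commutative square
\[
\xymatrix{
\hom_{\mathcal{C}_{a/}}(y, z) \ar[r] \ar[d] & \hom_{\mathcal{C}_{a/}}(x, z) \ar[d] \\
\hom_{\mathcal{C}}(y, z) \ar[r] & \hom_{\mathcal{C}}(x, z)
}
\]
in which the two vertical maps are inclusions of the respective homotopy fibers over a common basepoint of $\hom_{\mathcal{C}}(a, z)$. A standard pasting argument (with the defining fiber square for $\hom_{\mathcal{C}_{a/}}(x,z)$ sitting on the right) shows this square is a homotopy pullback. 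Fixing $z \in \mathcal{C}_{a/}$ arbitrary and applying the hypothesis to its underlying object in $\mathcal{C}$, \Cref{strongly} gives that the bottom horizontal arrow is a split covering space. By the observation following \Cref{easycartlemma}, split covering spaces are stable under base-change, so the top arrow is also a split cover. Since $z \in \mathcal{C}_{a/}$ was arbitrary, \Cref{strongly} applied in $\mathcal{C}_{a/}$ yields the conclusion.

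I do not anticipate a serious obstacle. The only non-formal input is the fiber-sequence description of slice mapping spaces, which is standard in $\infty$-category theory; once that is in hand, the result is a direct consequence of stability of split covers under base-change.
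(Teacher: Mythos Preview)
Your proposal is correct and follows essentially the same approach as the paper: both identify the slice mapping spaces as homotopy fibers of the ambient mapping spaces over a common point of $\hom_{\mathcal{C}}(a,z)$ and then argue that the split-cover property passes to these fibers. The only cosmetic difference is that you invoke the base-change stability of split covers via the pasting pullback square, whereas the paper argues directly by reducing to the case where $\hom_{\mathcal{C}}(x,z)$ is connected and writing $\hom_{\mathcal{C}}(y,z)$ as a disjoint union of copies; these are two phrasings of the same fact.
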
 
\begin{proof} 
Suppose $a  \to z$ is an object of  $\mathcal{C}_{a/}$.
Then we have
\begin{gather*} \hom_{\mathcal{C}_{a/}}(y, z) 
= \mathrm{fib} \left( \hom_{\mathcal{C}}(y, z) \to \hom_{\mathcal{C}}(a, z)\right)
, \\
\hom_{\mathcal{C}_{a/}}(x, z) 
= \mathrm{fib} \left( \hom_{\mathcal{C}}(x, z) \to \hom_{\mathcal{C}}(a, z)\right)
.
\end{gather*}
Since $\hom_{\mathcal{C}}(y, z) \to \hom_{\mathcal{C}}(x,z)$ is 
a split cover, it follows easily that the same holds after taking homotopy fibers over
the basepoint in $\hom_{\mathcal{C}}(a, z)$.
In fact, we can assume without loss of generality that
$\hom_{\mathcal{C}}(x,z)$ is connected, in which case $\hom_{\mathcal{C}}(y,
z)$ is a disjoint union $\bigsqcup_S \hom_{\mathcal{C}}(x,y)$. 
Taking fibers over the map to $\hom_{\mathcal{C}}(a, z)$ preserves the disjoint union as desired, so the map on fibers
is a split cover. 
\end{proof}

\section{$\mathbb{E}_\infty$-ring spectra}

We let $\clg$ denote the $\infty$-category of $\mathbb{E}_\infty$-ring spectra. 
The construction $THH$ in this case can be interpreted (by \cite{MSV}) as
tensoring with $S^1$: that is, we have
\[ THH(A) \simeq S^1 \otimes A, \quad A \in \clg.  \]
If one works in a relative setting, under an $\mathbb{E}_\infty$-ring $R$, then 
one has $THH^{R}(A) \simeq S^1 \otimes A$, where the tensor product is
computed in $\clg_{R/}$.

Given a morphism in $\clg_{R/}$, $A \to B$, 
we can use the setup of the previous section and obtain a morphism 
\[ THH^R(A) \otimes_A B \to THH^R(B)  \]
which is a special case of \eqref{naturalpushout}. 
The base-change problem for $THH$ asks when this is an equivalence.

By \Cref{whenbasechangeholds}, this is equivalent to the condition that
the morphism $A \to B$ in
$\clg_{R/}$ should be strongly 0-cotruncated.
We can now prove \Cref{ourversionWG} from the introduction, which we restate
for convenience.

\begin{thm}[]
Let $R$ be an $\mathbb{E}_\infty$-ring and let $A \to B$ be  an \'etale
morphism (as in \Cref{lurieetale}) in $\clg_{R/}$. Then the natural morphism
\( THH^R(A) \otimes_A B \to THH^R(B)  \)
is an equivalence. 
\end{thm}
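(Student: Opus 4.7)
The plan is to verify the criterion of \Cref{whenbasechangeholds}. By \Cref{relativecotrunc}, it suffices to show that the underlying map $\phi \colon A \to B$ in $\clg$ is strongly $0$-cotruncated, i.e.\ that for every $z \in \clg$, the restriction map
\( \hom_\clg(B, z) \to \hom_\clg(A, z) \)
is a split covering space.

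My key input is Lurie's rigidity theorem for \'etale morphisms of $\mathbb{E}_\infty$-ring spectra (\emph{Higher Algebra}, \S 7.5): whenever $A \to B$ is \'etale and $C$ is any $\mathbb{E}_\infty$-$A$-algebra, the mapping space $\hom_{\clg_{A/}}(B, C)$ is discrete, and the natural map identifies it with the ordinary set of ring homomorphisms $\hom_{\pi_0 A\text{-}\mathrm{Alg}}(\pi_0 B, \pi_0 C)$.

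With this in hand, I would contemplate the commutative square
\[
\xymatrix{
\hom_\clg(B, z) \ar[r] \ar[d] & \hom_\clg(A, z) \ar[d] \\
\hom_{\mathrm{Ring}}(\pi_0 B, \pi_0 z) \ar[r] & \hom_{\mathrm{Ring}}(\pi_0 A, \pi_0 z)
}
\]
with vertical maps induced by applying $\pi_0$. The main step is to verify that this is a homotopy pullback. A point of the pullback is a compatible pair $(f \colon A \to z,\, \tilde f \colon \pi_0 B \to \pi_0 z)$ with $\tilde f \circ \pi_0 \phi = \pi_0 f$; by Lurie's theorem, applied with $z$ viewed as an $\mathbb{E}_\infty$-$A$-algebra via $f$, such a pair corresponds, in a contractible space of ways, to a single $\mathbb{E}_\infty$-ring map $g \colon B \to z$, recovered as $f = g \circ \phi$ and $\tilde f = \pi_0 g$.

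To conclude, observe that the bottom map in the square is a map between discrete sets, hence trivially a split covering space; since split coverings are preserved under base change (as noted just after \Cref{easycartlemma}), the top map inherits this property, as required. The substantive content lies entirely in Lurie's rigidity theorem; everything else is a clean diagram chase, and in particular the \'etale hypothesis cannot be dropped because it is what forces the fibers of $\hom_\clg(B,z) \to \hom_\clg(A,z)$ to be discrete.
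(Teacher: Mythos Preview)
Your proof is correct and is essentially the same as the paper's: both reduce via \Cref{relativecotrunc} to the absolute case, invoke Lurie's result from \emph{Higher Algebra} \S 7.5 to produce the homotopy pullback square relating $\hom_\clg(-,z)$ to $\hom_{\mathrm{Ring}}(\pi_0(-),\pi_0 z)$, and conclude by pulling back the split cover between discrete sets. The only difference is cosmetic---you draw the square transposed and spell out why it is a pullback, whereas the paper simply cites Lurie.
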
 
This is closely related to \cite[Theorem 0.1]{WG}
and includes it in the case of a flat extension $R \to A$ of discrete
$\e{\infty}$-rings	. 
For connective $\mathbb{E}_\infty$-rings, this result is \cite[Lem. 5.7]{McM}
(who treat more generally the case of a $THH$-\'etale morphism).
\begin{proof} 
Given an \'etale morphism $A \to B$ in
$\clg_{R/}$, we need to argue that it is strongly 0-cotruncated. 
By \Cref{relativecotrunc}, we may reduce to the case where $R = S^0$. 
Given $C \in \clg$, we have a 
homotopy cartesian square
\[ \xymatrix{
\hom_{\clg}(B, C) \ar[d]  \ar[r] &  \hom_{\mathrm{Ring}} (\pi_0 B, \pi_0 C) \ar[d] \\
\hom_{\clg}(A, C) \ar[r] &  \hom_{\mathrm{Ring}} (\pi_0 A, \pi_0 C)
},\]
by, e.g.,  \cite[\S 7.5]{higheralg}. Here $\mathrm{Ring}$ is the category of
rings.
Since the right vertical map is a map of discrete spaces and therefore a
split covering, it follows that 
$\hom_{\clg}(B, C) \to \hom_{\clg}(A, C)$ is a split covering as desired.

\end{proof}

We also note in passing that the \'etale descent theorem has a partial
converse in the setting of \emph{connective} $\mathbb{E}_\infty$-rings. 
We note that this rules out non-algebraic Galois extensions. 
\begin{corollary} 
Let $A \to B$ be a morphism of connective $\mathbb{E}_\infty$-rings which is almost
of finite presentation \cite[\S 7.2.4]{higheralg}. Suppose the map
$THH(A) \otimes_A B \to THH(B)$ is an equivalence. Then $A \to B$ is \'etale.
\end{corollary}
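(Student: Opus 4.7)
The plan is to translate the $THH$ base-change hypothesis into vanishing of the relative cotangent complex $L_{B/A}$, and then invoke Lurie's characterization of étale morphisms between connective $\mathbb{E}_\infty$-rings.

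First I would unpack the hypothesis using the results of the previous section. By \Cref{whenbasechangeholds}, the assumption that $THH(A) \otimes_A B \to THH(B)$ is an equivalence is equivalent to $A \to B$ being strongly 0-cotruncated in $\clg$; by \Cref{relativecotrunc}, it is then strongly 0-cotruncated as a morphism of $\clg_{A/}$. Since $A$ is initial in $\clg_{A/}$, for any $C \in \clg_{A/}$ the mapping space $\hom_{\clg_{A/}}(A, C)$ is contractible, and a split covering space over a point is exactly a discrete space. Thus $\hom_{\clg_{A/}}(B, C)$ is discrete for every $A$-algebra $C$.

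Next I would test this discreteness against trivial square-zero extensions. For any $B$-module $M$, regard $C = B \oplus M$ as an object of $\clg_{A/}$ via the unit. The fiber of $\hom_{\clg_{A/}}(B, B \oplus M) \to \hom_{\clg_{A/}}(B, B)$ taken over $\mathrm{id}_B$ identifies, by the universal property of the cotangent complex, with the space of $A$-linear derivations
\[ \mathrm{Map}_B(L_{B/A}, M). \]
As a fiber of discrete spaces this is itself discrete, so $\pi_i \mathrm{Map}_B(L_{B/A}, M) = 0$ for all $i \geq 1$ and every $B$-module $M$. Since $M$ is arbitrary and can be shifted, this implies $\pi_0 \mathrm{Map}_B(L_{B/A}, N) = 0$ for every $B$-module $N$; taking $N = L_{B/A}$ and examining the identity forces $L_{B/A} \simeq 0$.

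Finally, the vanishing $L_{B/A} \simeq 0$ combined with the hypothesis that $A \to B$ is almost of finite presentation implies that $A \to B$ is étale, by Lurie's characterization of étale morphisms between connective $\mathbb{E}_\infty$-rings \cite[\S 7.5]{higheralg}. The main subtlety is the cotangent complex step: one must allow arbitrary, possibly nonconnective, test modules $M$ to extract full vanishing of $L_{B/A}$ from discreteness of mapping spaces. Once this is carried out the conclusion is immediate from the cited characterization and the preceding categorical reformulation.
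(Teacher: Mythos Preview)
Your proposal is correct and follows essentially the same route as the paper: deduce from the base-change hypothesis that $B$ is 0-cotruncated in $\clg_{A/}$, test against trivial square-zero extensions $B \oplus M$ to see that the derivation spaces $\mathrm{Map}_B(L_{B/A}, M)$ are discrete, shift $M$ by $\Sigma$ to upgrade discreteness to contractibility and conclude $L_{B/A} \simeq 0$, and finally invoke Lurie's characterization of \'etale morphisms of connective $\mathbb{E}_\infty$-rings. The paper phrases the shift step as $\hom_{\clg_{A//B}}(B, B \oplus M) \simeq \Omega\,\hom_{\clg_{A//B}}(B, B \oplus \Sigma M)$ and cites \cite[Lem.~8.9]{DAGVII} for the final implication, but the substance is identical.
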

\begin{proof} 
Indeed, $B$ defines a 0-cotruncated object (\Cref{0cotrunc}) of $\clg_{A/}$ 
and it is well-known that this, combined with the fact that $B$ is almost of
finite presentation, implies that $B$ is \'etale.  
We reproduce the argument for the convenience of the reader. 

In fact, 
since $B$ is 0-cotruncated, one finds that for any $B$-module $M$, the space of
maps\footnote{For an $\infty$-category $\mathcal{C}$ and a morphism $x \to y$,
we let $\mathcal{C}_{x//y}$ denote $(\mathcal{C}_{x/})_{/y}$ where $y \in
\mathcal{C}_{x/}$ via the given morphism.} 
$\hom_{\clg_{A// B}}(B, B \oplus M)$ is homotopy discrete, where the
$\mathbb{E}_\infty$-ring $B \oplus M$
is given the square-zero multiplication. 
Replacing $M$ by $\Sigma M$, it follows that 
\[  \hom_{\clg_{A// B}}(B, B \oplus M) \simeq 
\Omega \hom_{\clg_{A// B}}(B, B \oplus  \Sigma M)
\]
is actually contractible. 
Thus the cotangent complex $L_{B/A}$ vanishes, which implies that $B$ is
\'etale over $A$ by \cite[Lem. 8.9]{DAGVII}.
The connectivity is used in this last step. 
\end{proof} 

The above argument also appears in \cite[\S 9.4]{rognes}, where it is shown
that a map $A \to B$ which is 0-cotruncated as in \Cref{0cotrunc} below (which Rognes calls \emph{formally
symmetrically \'etale}, and which has been called \emph{$THH$-\'etale} in
\cite{McM}) has to have vanishing cotangent complex (which is called
\emph{$TAQ$-\'etale)}; see \cite[Lem.
9.4.4]{rognes}. The key point is that in the connective setting,
$TAQ$-\'etaleness plus a weak finiteness condition is enough to imply
\'etaleness. This entirely breaks down when one works with nonconnective
$\e{\infty}$-ring spectra.

\section{Connection with descent}

In this section, we will show that the question of base-change in $THH$ is
equivalent to a descent-theoretic question. We will then use some of the
descent results of \cite{CMNN} to obtain examples where base-change for $THH$
holds. 
Let $A \to B$ be a faithful $G$-Galois extension of $\mathbb{E}_\infty$-rings for $G$ a finite group.

To begin with, we will need to recall a fact about Galois descent.

\begin{proposition}[{Cf. for example \cite[Ch.~6]{meier} or
\cite[Th.~2.8]{Bannerjee} or \cite[Th.~9.4]{galoischromatic}}]
If $A \to B$ is a faithful $G$-Galois extension, then we have an equivalence
of symmetric monoidal $\infty$-categories
\[ \md
(A) \simeq \md(B)^{hG},  \]
where the left adjoint is extension of scalars along $A \to B$ and the right
adjoint is given by taking homotopy fixed points. 
\label{galdesc}
\end{proposition}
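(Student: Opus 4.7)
The plan is to establish descent for modules along the faithful map $A \to B$ via Barr--Beck--Lurie, and then to identify the resulting descent data with $G$-equivariant $B$-modules using the Galois condition. First I would consider the symmetric monoidal adjunction
$$F = B \otimes_A (-) : \md(A) \rightleftarrows \md(B) : U,$$
where $F$ is left adjoint to the forgetful functor $U$. Faithfulness of the Galois extension is exactly the assertion that $F$ is conservative. The comonadic form of Barr--Beck--Lurie, in its symmetric monoidal enhancement (see \cite[\S 4.7]{higheralg}), then identifies $\md(A)$ as a symmetric monoidal $\infty$-category with the totalization of the Amitsur cosimplicial object $[n] \mapsto \md(B^{\otimes_A (n+1)})$ associated with $A \to B$.

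Next I would exploit the Galois condition in its standard form: there is a $G$-equivariant equivalence
$$B \otimes_A B \xrightarrow{\ \simeq \ } \prod_{g \in G} B$$
of $\mathbb{E}_\infty$-$B$-algebras, with $G$ acting on the left via the second tensor factor and on the right by permutation of factors. Combined with faithfulness, this equivalence upgrades, level by level and coherently in the simplicial degree, to an equivalence between the Amitsur cosimplicial object $\md(B^{\otimes_A (\bullet+1)})$ and the standard cosimplicial object computing the homotopy fixed points $\md(B)^{hG}$. Passing to limits yields the desired symmetric monoidal equivalence $\md(A) \simeq \md(B)^{hG}$, with left adjoint extension of scalars and right adjoint $(-)^{hG}$ as stated.

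I expect the main obstacle to be the coherent globalization of the Galois identification $B \otimes_A B \simeq \prod_G B$ across all simplicial degrees, together with the verification that it is compatible with the symmetric monoidal structure at each level. At the level of homotopy categories the identification is essentially routine; matching every coface and codegeneracy map and the relevant monoidal coherences is what requires careful $\infty$-categorical bookkeeping, and is the substance of the detailed treatments in the references \cite[Ch.~6]{meier}, \cite[Th.~2.8]{Bannerjee}, and \cite[Th.~9.4]{galoischromatic} cited in the statement, which I would ultimately invoke rather than redo from scratch.
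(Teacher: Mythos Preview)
The paper does not supply its own proof of this proposition; it simply records the statement and defers entirely to the cited references. Your outline---Barr--Beck--Lurie comonadicity from faithfulness, followed by the identification of the Amitsur cosimplicial object with the $BG$-shaped diagram via the Galois equivalence $B \otimes_A B \simeq \prod_G B$---is correct and is precisely the argument carried out in those references, so your proposal and the paper agree in substance (both ultimately invoke the same sources), with your version adding a correct sketch of what is inside them.
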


We can restate the above equivalence in the following manner.

\begin{corollary} 
Let $\fun(BG, \sp)$ be the symmetric monoidal $\infty$-category of $G$-spectra
equipped with a $G$-action. Then we have a natural equivalence
\[ \md_{\fun(BG, \sp)}(B) \simeq \md_{\sp}(A)  \]
given by taking homotopy fixed points. 
\end{corollary}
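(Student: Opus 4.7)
The plan is to reduce the corollary to \Cref{galdesc} by identifying the $\infty$-category $\md_{\fun(BG, \sp)}(B)$ with the homotopy fixed points $\md(B)^{hG}$.

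The Galois action of $G$ on $B$ makes $B$ into a commutative algebra object of the symmetric monoidal $\infty$-category $\fun(BG, \sp)$. Unwinding definitions, an object of $\md_{\fun(BG, \sp)}(B)$ is a spectrum $M$ equipped with a $G$-action together with a $B$-module structure whose action map $B \otimes M \to M$ is $G$-equivariant; equivalently, a $B$-module carrying a semilinear $G$-action. On the other hand, the same $G$-action on $B$ induces a $G$-action on $\md(B)$ via restriction of scalars along the automorphisms of $B$, producing a functor $BG \to \mathrm{CAlg}(\mathrm{Pr}^L_{st})$ whose limit is by definition $\md(B)^{hG}$.

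The crux is the identification $\md_{\fun(BG, \sp)}(B) \simeq \md(B)^{hG}$. This is an instance of the general fact that forming module categories commutes with limits of presentable symmetric monoidal $\infty$-categories: for a limit diagram $\mathcal{C} = \lim_I \mathcal{C}_i$ in $\mathrm{CAlg}(\mathrm{Pr}^L_{st})$ and a commutative algebra $A \in \mathrm{CAlg}(\mathcal{C})$ restricting to algebras $A_i \in \mathrm{CAlg}(\mathcal{C}_i)$, one has a natural equivalence
\[
\md_{\mathcal{C}}(A) \simeq \lim_{i \in I} \md_{\mathcal{C}_i}(A_i).
\]
Specializing to $\fun(BG, \sp) = \lim_{BG} \sp$ (with trivial $G$-action on $\sp$) and to $A = B$, the right-hand side becomes $\lim_{BG} \md(B) = \md(B)^{hG}$, as needed.

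Composing this identification with \Cref{galdesc} then yields
\[
\md_{\fun(BG, \sp)}(B) \simeq \md(B)^{hG} \simeq \md(A),
\]
with the second equivalence implemented by homotopy fixed points, which is the statement of the corollary. The main thing to verify is the compatibility of the two manifestations of the $G$-action—semilinearity inside $\fun(BG, \sp)$ on the one hand, and restriction of scalars along automorphisms of $B$ on the other—but once the commutation of module categories with limits is invoked, this compatibility is a formal unwinding of the relevant straightening-unstraightening equivalence. I expect this compatibility of $G$-actions, rather than the existence of the equivalence itself, to be the only step requiring care.
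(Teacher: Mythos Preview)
Your proof is correct and follows essentially the same approach as the paper's. The paper's proof is a single sentence invoking \Cref{galdesc} together with the fact that forming modules in a symmetric monoidal $\infty$-category is compatible with homotopy limits; your argument spells this out in more detail, making the identification $\md_{\fun(BG,\sp)}(B) \simeq \md(B)^{hG}$ explicit via $\fun(BG,\sp) = \lim_{BG}\sp$, but the key ingredient is the same.
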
 
\begin{proof} 
This follows from \Cref{galdesc} using the fact that the construction of
forming modules in a symmetric monoidal $\infty$-category is compatible with
homotopy limits of symmetric monoidal $\infty$-categories. 
\end{proof}

Let $\mathcal{C} = \mathrm{Fun}(BG, \clg)$ be the $\infty$-category of
$\mathbb{E}_\infty$-algebras equipped with a $G$-action, so that $B$ 
defines an object of $\mathcal{C}$. We have therefore have natural
equivalences of $\infty$-categories
\begin{equation} \label{htpyfixedpoints} \mathcal{C}_{B/} \simeq \clg(
\fun(BG, \sp))_{B/} 
\simeq \clg ( \md_{\fun(BG, \sp)}(B) )  \simeq \clg( \md(A)).
\end{equation} 
where the last equivalence is given by taking
homotopy fixed points. 
We now obtain: 

\begin{proposition} 
For a faithful $G$-Galois extension $A \to B$, the following two statements
are equivalent:
\begin{itemize}
\item  
$THH(A) \otimes_A B \to THH(B)$ is an
equivalence. 
\item $THH(A) \to THH(B)$ is a faithful $G$-Galois extension.
\item
The map $THH(A) \simeq \left( THH(A) \otimes_A B \right)^{hG} \to THH(B)^{hG}$
is an equivalence.
\end{itemize}
\label{descvsbasechange}
\end{proposition}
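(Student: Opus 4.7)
The plan is to establish the cycle $(2) \Rightarrow (3) \Leftrightarrow (1) \Rightarrow (2)$. The heart of the argument is the equivalence $(1) \Leftrightarrow (3)$, obtained from the Galois descent equivalence \eqref{htpyfixedpoints}; the remaining implications are then direct verifications of the defining properties of a Galois extension.

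For $(1) \Leftrightarrow (3)$, both $THH(A) \otimes_A B$ and $THH(B)$ are naturally objects of $\mathcal{C}_{B/}$, with the $G$-action on the former coming from the $G$-action on the $B$-factor and on the latter from the functoriality of $THH$ applied to the $G$-action on $B$; the canonical comparison map is manifestly $G$-equivariant. Since \eqref{htpyfixedpoints} is an equivalence of $\infty$-categories it detects equivalences. Its image sends $THH(B) \mapsto THH(B)^{hG}$ and sends $THH(A) \otimes_A B \mapsto (THH(A) \otimes_A B)^{hG} \simeq THH(A)$, the last equivalence being Galois descent (\Cref{galdesc}) applied to the $A$-module $THH(A)$. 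Hence the image of the comparison map is precisely the composite appearing in (3), and it is an equivalence iff the original map is.

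For $(2) \Rightarrow (3)$ the conclusion is immediate: a faithful $G$-Galois extension $R \to S$ satisfies $R \simeq S^{hG}$ by definition. For $(1) \Rightarrow (2)$, assume (1) and verify the three defining conditions for $THH(A) \to THH(B)$ to be a faithful $G$-Galois extension. First, $THH(A) \simeq THH(B)^{hG}$ is (3), which follows from (1). Second, using (1) and the unramifiedness $B \otimes_A B \simeq \prod_G B$ of the original Galois extension, base-change yields
\[ THH(B) \otimes_{THH(A)} THH(B) \simeq THH(A) \otimes_A (B \otimes_A B) \simeq THH(A) \otimes_A \prod_G B \simeq \prod_G THH(B), \]
which one checks agrees with the canonical map. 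Third, $THH(B) \simeq THH(A) \otimes_A B$ is the base-change of the faithful $A$-module $B$ and so is faithful over $THH(A)$: if $M$ is a $THH(A)$-module with $THH(B) \otimes_{THH(A)} M \simeq 0$, then $B \otimes_A M \simeq 0$ (restricting $M$ along $A \to THH(A)$), forcing $M \simeq 0$.

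The main obstacle I anticipate is the coherence check in the first step: one must verify that the $G$-action on $THH(B)$ induced by functoriality from the action on $B$ coincides with the one implicit in viewing $THH(B)$ as an object of $\mathcal{C}_{B/}$, and that the equivalence \eqref{htpyfixedpoints} really sends the canonical base-change map to the map in (3). Once this symmetric-monoidal naturality is granted, the remaining steps are formal.
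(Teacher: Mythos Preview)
Your argument is correct and follows essentially the same route as the paper: the core step $(1)\Leftrightarrow(3)$ is obtained exactly as you do, by noting that the comparison map lives in $\mathcal{C}_{B/}$ and invoking the equivalence \eqref{htpyfixedpoints}, and the coherence issue you flag is precisely what the paper dispatches with the remark that the maps $B \to THH(A)\otimes_A B \to THH(B)$ are natural in the $A$-algebra $B$. The only cosmetic difference is in $(1)\Rightarrow(2)$: rather than verifying the three Galois axioms by hand, the paper simply observes that under $(1)$ the map $THH(A)\to THH(B)$ is the base-change of $A\to B$ along $A\to THH(A)$, and base-change preserves faithful $G$-Galois extensions---your direct verification is just this fact unpacked.
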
 
\begin{proof} 
In this case, 
the maps $B \to THH(A) \otimes_A B \to THH(B)$ that we obtain are
$G$-equivariant, as they are natural in the $\mathbb{E}_\infty$-$A$-algebra $B$. 
Therefore, the map $THH(A) \otimes_A B \to THH(B)$ is naturally a morphism in
$\clg(\fun(BG, \sp))_{B/}$. 
By \eqref{htpyfixedpoints}, the map is an equivalence if and only if 
it induces an equivalence on homotopy fixed points. 

Finally, if $THH(A) \otimes_A B \to THH(B)$ is an equivalence, then the morphism 
$THH(A) \to THH(B)$ is a base-change of the faithful $G$-Galois extension $A
\to B$ and is thus a faithful $G$-Galois extension itself. 
Conversely, if $THH(A) \to THH(B)$ is a faithful $G$-Galois extension, then the
descent map $THH(A) \to THH(B)^{hG}$ is an equivalence. 
\end{proof}

In particular, the map $A \to B$ is strongly 0-cotruncated if and only if one
has \emph{Galois descent} for $THH$ along the map $A \to B$. 
In \cite{CMNN}, we give  a general criterion for 
proving descent in telescopically localized $THH$.

\begin{theorem}[\cite{CMNN}] 
\label{CMNNthm}
Suppose $A \to B$ is a $G$-Galois extension such that the map $K_0(B) \otimes
\mathbb{Q} \to K_0(A) \otimes \mathbb{Q}$ induced by restriction of scalars is
surjective. Fix an implicit prime $p$ and a height $n$. Fix a  
weakly additive (cf. \cite[Def.~3.11]{CMNN}) invariant $E$ of $\kappa$-compact small idempotent-complete $A$-linear
$\infty$-categories taking values in a presentable stable $\infty$-category.
Then the natural morphisms $$L_n^f E( \perf(A))
\to L_n^f E( \perf(B))^{hG} \to \left( L_n^f E(\perf(B))\right)^{hG}$$
are equivalences, where $L_n^f$ denotes finitary $L_n$-localization. 
In particular, one can take $E = K, THH, TC$. 
\end{theorem}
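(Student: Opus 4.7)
The plan is to quote this theorem directly from \cite{CMNN}, but I can sketch the architecture of the argument given there. The starting point is the categorical Galois descent of \Cref{galdesc}, which gives an equivalence $\md(A) \simeq \md(B)^{hG}$ of symmetric monoidal $\infty$-categories. This restricts to an equivalence $\perf(A) \simeq \perf(B)^{hG}$ of small $A$-linear idempotent-complete stable $\infty$-categories, and hence for any invariant $E$ that commutes with \emph{all} limits the descent statement $E(\perf(A)) \simeq E(\perf(B))^{hG}$ follows formally. The real content is therefore that weakly additive invariants, which only commute with finite limits a priori, nonetheless satisfy descent after $L_n^f$-localization.

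The mechanism for this is the nilpotence methodology developed in \cite{CMNN}. One considers the cobar construction $B^{\otimes_A \bullet + 1}$ and the associated cosimplicial diagram of $A$-linear $\infty$-categories $\perf(B^{\otimes_A \bullet+1})$, whose totalization recovers $\perf(A)$ by the previous paragraph. Applying $E$ and $L_n^f$ yields a cosimplicial object whose totalization one wants to identify with $L_n^f E(\perf(A))$. For a weakly additive functor, it suffices to prove that the associated Amitsur-style comonad on $L_n^f E(\perf(A))$ is \emph{nilpotent} in an appropriate sense, since nilpotence of the comonad implies that the Bousfield-Kan spectral sequence collapses in a uniformly bounded range and that totalization commutes with filtered colimits, so that the obstruction to $L_n^f$ commuting with the totalization vanishes.

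The nilpotence itself is verified by a transfer argument. Because $A \to B$ is faithful $G$-Galois, there is a transfer map $B \to A$ of $A$-modules, and the hypothesis that $K_0(B) \otimes \mathbb{Q} \to K_0(A) \otimes \mathbb{Q}$ is surjective is precisely what allows one to write the class of the unit of $A$ as a rational combination of transfers of classes in $K_0(B)$. Together with the descendability criterion of \cite{CMNN}, this surjectivity upgrades to the statement that $\perf(B)$, viewed as an object of an appropriate $\infty$-category of $A$-linear categories, is \emph{descendable} in the sense of Balmer–Mathew after passage to $T(n)$-local coefficients, which is exactly the nilpotence input needed above.

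The main obstacle is this last step: promoting the rational $K_0$ surjectivity into a genuine descendability / nilpotence statement in the telescopically localized setting, which is the central technical contribution of \cite{CMNN}. Once this nilpotence is in hand, the two displayed equivalences follow formally — the first from descent for $L_n^f E$ applied to the cobar resolution combined with weak additivity, and the second because $L_n^f$ is a finite localization and therefore commutes with the finite homotopy limit $(-)^{hG}$ against a bounded-cohomological-dimension group action, reducing the outer $L_n^f$ to the inner one.
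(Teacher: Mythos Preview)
The paper itself gives no proof of this theorem; it is simply quoted from \cite{CMNN}. Your proposal correctly identifies this and your decision to cite it is exactly what the paper does, so at that level the two agree.

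Your added sketch of the \cite{CMNN} argument is broadly faithful in its first two paragraphs, but the final paragraph contains a genuine error. You justify the second displayed equivalence
\[
L_n^f\bigl(E(\perf(B))^{hG}\bigr) \;\longrightarrow\; \bigl(L_n^f E(\perf(B))\bigr)^{hG}
\]
by saying that $L_n^f$ is a ``finite localization'' and that $(-)^{hG}$ is a ``finite homotopy limit'' for a group of ``bounded cohomological dimension.'' None of these claims holds: $(-)^{hG}$ for a finite group $G$ is an \emph{infinite} totalization, finite groups have infinite $\mathbb{Z}$-cohomological dimension at primes dividing $|G|$, and a smashing localization has no formal reason to commute with such a limit. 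If this argument worked, the $K_0$ hypothesis would be unnecessary for the second equivalence, which it is not.

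The correct mechanism is the same nilpotence you invoke for the first equivalence: once the relevant cosimplicial (equivalently, $G$-equivariant) diagram is shown to be nilpotent in the sense of \cite{CMNN}, the Tot tower for $(-)^{hG}$ stabilizes at a finite stage, so the homotopy fixed points are computed by an actual finite limit. \emph{That} is why $L_n^f$ commutes with it. In other words, both equivalences are consequences of the same descendability input, not of any general property of $L_n^f$.
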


As a result, we can prove that the base-change map is an equivalence in a large
class of ``chromatic'' examples of Galois extensions.
\begin{theorem} 
Suppose $A \to B$ is a faithful $G$-Galois extension of $\mathbb{E}_\infty$-rings. 
Assume that for every prime $p$, the localization $A_{(p)}$ is $L_n^f$-local
for some $n = n(p)$.
Suppose the map $K_0(B)  \otimes
\mathbb{Q} \to K_0( A) \otimes \mathbb{Q}$ 
is surjective (or equivalently has image containing the unit). Then the
base-change map $THH(A) \otimes_A B \to THH(B)$ is an equivalence. 
\end{theorem}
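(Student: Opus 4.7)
The plan is to reduce to the Galois descent question via \Cref{descvsbasechange} and then apply the CMNN theorem prime-by-prime. First, by \Cref{descvsbasechange}, it suffices to show that the comparison map $THH(A) \to THH(B)^{hG}$ is an equivalence. Since a map of spectra is an equivalence if and only if it becomes one after $p$-localization for every prime $p$ (the fiber has vanishing homotopy groups iff each is zero after every $p$-localization), we may fix a prime $p$ and work $p$-locally throughout.

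For the fixed $p$, let $n = n(p)$ be as in the hypothesis, so that $A_{(p)}$ is $L_n^f$-local. Since $A_{(p)}$ is then an algebra over $L_n^f S^0_{(p)}$, and since $L_n^f$-local spectra form a full subcategory of $\sp$ closed under module structures over $L_n^f$-local ring spectra and under arbitrary homotopy limits, it follows that $THH(A)_{(p)}$ and $THH(B)_{(p)}$ are $L_n^f$-local, and hence so is the homotopy fixed point spectrum $THH(B)^{hG}_{(p)}$. Consequently, the $p$-local comparison map $THH(A)_{(p)} \to THH(B)^{hG}_{(p)}$ is already a map of $L_n^f$-local spectra, and agrees with its own $L_n^f$-localization.

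Now invoke \Cref{CMNNthm} with $E = THH$, which is weakly additive: the surjectivity hypothesis on $K_0(B) \otimes \mathbb{Q} \to K_0(A) \otimes \mathbb{Q}$ is exactly what is required, and the conclusion yields the equivalence $L_n^f THH(A) \simeq (L_n^f THH(B))^{hG}$. Combined with the observation of the previous paragraph, this gives the equivalence $THH(A)_{(p)} \simeq THH(B)^{hG}_{(p)}$. Since $p$ was arbitrary, the unlocalized statement follows.

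The main point requiring care is the compatibility between $p$-localization, $L_n^f$-localization, and the formation of homotopy fixed points — specifically, that $G$-homotopy fixed points of $L_n^f$-local spectra remain $L_n^f$-local — which, as noted, is immediate from the closure of the $L_n^f$-local subcategory under homotopy limits. Apart from this, the proof is essentially an exercise in arranging the hypotheses so that \Cref{CMNNthm} applies directly.
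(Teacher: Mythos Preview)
Your proposal is correct and follows essentially the same route as the paper: localize at a prime, observe that all relevant $THH$ terms are automatically $L_n^f$-local (the paper phrases this via the smashing property of $L_n^f$, which is equivalent to your observation that modules over an $L_n^f$-local ring spectrum are $L_n^f$-local), and then combine \Cref{descvsbasechange} with \Cref{CMNNthm}. The only cosmetic difference is that the paper localizes the base-change map directly before invoking \Cref{descvsbasechange}, whereas you pass to the descent formulation first and then localize; both orderings work.
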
 
\begin{proof} 
To check that the map 
$THH(A) \otimes_A B \to THH(B)$ is an equivalence, it suffices to localize at
$p$, so we may assume $A$ and $B$ are $p$-local, and therefore $L_n^f$-local.
Since $L_n^f$ is a smashing localization, it follows that all $THH$ terms in
sight are automatically $L_n^f$-localized. 
In this case, the result follows by combining 
\Cref{descvsbasechange} and \Cref{CMNNthm}. 
\end{proof} 
\begin{example} 
Most classes of examples of faithful Galois extensions in chromatic homotopy
theory satisfy the conditions of \Cref{CMNNthm}. We refer to \cite[\S 5]{CMNN}
for a detailed treatment. 
For example:
\begin{enumerate}
\item  
The $C_2$-Galois extension $KO \to KU$ or the $C_{p-1}$-Galois
extension $L \to \widehat{KU}_p$.  
\item
The
$G$-Galois extension  $E_n^{hG} \to E_n$
if $G$ is a finite subgroup of the extended Morava stabilizer group (cf.
\cite[Appendix B]{CMNN} by Meier, Naumann, and Noel). 
\item 
Any Galois extension
of $TMF[1/n], Tmf_0(n)$ or related spectra.
\end{enumerate}
 It follows that the comparison map
in $THH$ is an equivalence for these Galois extensions. 
\end{example}

\section{A counterexample}

In this section, we will give an example over $\mathbb{F}_p$ where the
comparison (or  equivalently descent) map for $THH$ is not an equivalence. 
We begin with a useful weakening of \Cref{strongly}.
\begin{definition} 
\label{0cotrunc}
A morphism $x \to y$ in an $\infty$-category $\mathcal{C}$ is said to be
\emph{0-cotruncated} if, for every $z \in \mathcal{C}$, the map
$\hom_{\mathcal{C}}(y, z) \to \hom_{\mathcal{C}}(x, z)$ is a covering space
(i.e., has discrete homotopy fibers over any basepoint). 
An object $x \in \mathcal{C} $ is said to be \emph{0-cotruncated}
if $\hom_{\mathcal{C}}(x, z)$ is discrete for any $z \in \mathcal{C}$.
\end{definition} 

The condition that $x \to y$ should be cotruncated is equivalent to the
statement that $y \in \mathcal{C}_{x/}$ should define a 0-cotruncated object. 
Note that an object $x \in \mathcal{C}$ is 0-cotruncated if and only if the natural map
$x \to S^1 \otimes x$ is an equivalence.

In the setting of $\mathbb{E}_\infty$-ring spectra, \'etale morphisms are far from the
only examples of 0-cotruncated morphisms. 
For example, any faithful $G$-Galois extension in the sense of 
Rognes \cite{rognes} is 0-cotruncated. This is essentially \cite[Lemma
9.2.6]{rognes}. 
However, we show that faithful Galois extensions need not be \emph{strongly}
0-cotruncated. Equivalently, base-change for $THH$ can fail for them. 

\begin{proof}[Proof of \Cref{mainexample}]
Consider the degree $p$ map $S^1 \to S^1$, which is a $\mathbb{Z}/p$-torsor.
Let $k$ be a separably closed field of characteristic $p$.
For a space $X$, we let $C^*(X; k) = F(X_+; k)$ denote the
$\mathbb{E}_\infty$-rings of $k$-valued cochains on $X$.
The induced map of $\mathbb{E}_\infty$-rings 
$\phi\colon C^*(S^1; k) \to C^*(S^1; k)$ is a faithful
$\mathbb{Z}/p$-Galois extension of $\mathbb{E}_\infty$-ring spectra. 
This follows from \cite[Prop. 5.6.3(a)]{rognes} together with the criterion for
the faithfulness via vanishing of the Tate construction \cite[Prop.
6.3.3]{rognes}. See also
\cite[Th. 7.13]{galoischromatic}. 

We will show, nonetheless, that $\phi$ does not satisfy 
base-change for $THH$, or equivalently that it is not strongly 0-cotruncated.
It suffices to show this in $\clg_{k/}$ in view  of \Cref{relativecotrunc}.

By $p$-adic homotopy theory \cite{mandell} (see also \cite{DAG13}, which
does not assume $k = \overline{\mathbb{F}_p}$), the natural map
\[ S^1 \to \hom_{\clg_{k/}}( C^*(S^1; k), k)  \]
exhibits 
$\hom_{\clg_{k/}}( C^*(S^1; k), k) $ as the $p$-adic completion of $S^1$. 
In particular, 
$\hom_{\clg_{k/}}( C^*(S^1; k), k) \simeq K(\mathbb{Z}_p, 1) $ and the map
given by precomposition with $\phi$
$$\hom_{\clg_{k/}}( C^*(S^1; k), k) \stackrel{\phi^*}{\to} \hom_{\clg_{k/}}(
C^*(S^1; k), k),$$
is identified with multiplication by $p$, $K(\mathbb{Z}_p, 1) \to
K(\mathbb{Z}_p, 1)$. In particular, while this is a covering map, it is
\emph{not} a split covering map, so that $\phi$ is not strongly 0-cotruncated. 
\end{proof}

The use of cochain algebras in providing such counterexamples goes back to an
idea of Mandell \cite[Ex. 3.5]{McM}, who gives an example of a morphism of
$\e{\infty}$-ring spectra with trivial cotangent complex (i.e., is
$TAQ$-\'etale) which is not
$THH$-\'etale. Namely, Mandell shows that if $n  > 1$, then the map $C^*( K(
\mathbb{Z}/p, n); \mathbb{F}_p) \to 
\mathbb{F}_p$ has trivial cotangent complex. 

We close by observing that it is the fundamental group that it is at the root
of these problems. 
\begin{proposition} Let $X$ be a \emph{simply connected,} pointed space, and let $A \to B$ be a
faithful
$G$-Galois extension of $\mathbb{E}_\infty$-rings. 
In this case, the map
of $\mathbb{E}_\infty$-rings 
\[ (X \otimes A) \otimes_A B \to X \otimes B,  \]
is an equivalence. 
\end{proposition}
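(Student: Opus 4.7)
The plan is to prove a variant of \Cref{whenbasechangeholds} in which $S^1$ is replaced by a simply connected pointed space $X$, and correspondingly the split-covering condition of \Cref{easycartlemma} is weakened to the covering-space condition of \Cref{0cotrunc}. First, I would unwind the universal property: the map $(X \otimes A) \otimes_A B \to X \otimes B$, which is the natural comparison out of the pushout $(X \otimes A) \sqcup_A B$ in $\clg$, is an equivalence if and only if for every $C \in \clg$ the square
\[ \xymatrix{
\hom(X, \hom_{\clg}(B, C)) \ar[r] \ar[d] & \hom_{\clg}(B, C) \ar[d] \\
\hom(X, \hom_{\clg}(A, C)) \ar[r] & \hom_{\clg}(A, C)
} \]
is homotopy cartesian, where the vertical maps are induced by $A \to B$ and the horizontal maps are evaluation at the basepoint of $X$.

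Next I would prove the following analog of \Cref{easycartlemma}: for any pointed simply connected space $X$ and any map $p\colon Y_1 \to Y_2$ of spaces with discrete homotopy fibers (that is, a covering space), the square
\[ \xymatrix{
\hom(X, Y_1) \ar[r] \ar[d] & Y_1 \ar[d] \\
\hom(X, Y_2) \ar[r] & Y_2
} \]
is homotopy cartesian. The homotopy fiber of $\hom(X, Y_i) \to Y_i$ over a point $y_i$ is the based mapping space $\hom_*((X, *), (Y_i, y_i))$, so it suffices to show that for each $y_1 \in Y_1$ the induced map $\hom_*((X,*),(Y_1,y_1)) \to \hom_*((X,*),(Y_2,p(y_1)))$ is an equivalence. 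Restricted to the connected components of the basepoints, $p$ becomes a covering map of connected spaces and hence induces an equivalence on universal covers; since $X$ is simply connected, any based map from $X$ into a connected pointed space factors uniquely through its universal cover, so the two based mapping spaces agree.

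The theorem then follows by applying the variant above to $Y_1 = \hom_{\clg}(B, C)$ and $Y_2 = \hom_{\clg}(A, C)$: by \cite[Lem.~9.2.6]{rognes} (recalled in the discussion preceding \Cref{mainexample}), every faithful $G$-Galois extension is $0$-cotruncated in the sense of \Cref{0cotrunc}, so this map is a covering space for every $C \in \clg$. I do not anticipate a serious obstacle; the only mild subtlety is carrying out the covering-space lifting argument in the $\infty$-categorical setting, but the mapping spaces $\hom_{\clg}(-, C)$ are simply objects in the $\infty$-category of spaces, where classical covering space theory applies formally.
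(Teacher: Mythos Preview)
Your proposal is correct and follows essentially the same approach as the paper: reduce to showing the square of mapping spaces is homotopy cartesian, then use that $\hom_{\clg}(B,C) \to \hom_{\clg}(A,C)$ is a covering space (since faithful Galois extensions are $0$-cotruncated) together with the simple connectivity of $X$. The paper's proof is a terse two-sentence version of exactly this; your write-up simply unpacks the covering-space/universal-cover lifting step that the paper leaves implicit.
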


In particular, one does have base-change for higher
topological Hochschild homology (i.e., where $X = S^n, n > 1$). 

\begin{proof} 
Following the earlier reasoning, it suffices to show that whenever 
$C \in \clg$, the square
\[ \xymatrix{
\hom(X, \hom_{\clg}(B, C))
\ar[d] \ar[r] & \hom_{\clg}(B, C) \ar[d] \\
\hom(X, \hom_{\clg}(A ,C)) \ar[r] &  \hom_{\clg}(A, C)
}\]
is homotopy cartesian. 
However, this follows because $\hom_{\clg}(B, C) \to \hom_{\clg}(A, C)$ is a covering space,
and $X$ is simply connected. 
\end{proof}

\bibliographystyle{alpha}
\bibliography{THH}
\end{document}